\newtheorem{theorem}{Theorem}
\newtheorem{corollary}[theorem]{Corollary}
\newtheorem{lemma}[theorem]{Lemma}
\newtheorem{question}[theorem]{Question}
\newenvironment{proof}[1][Proof]{\noindent\textbf{#1.} }{\ \rule{0.5em}{0.5em}}
\begin{document}

\title{The hyperspace of non-cut subcontinua of graphs}

\author{Alejandro Illanes, Ver\'onica Mart\'inez-de-la-Vega and Jorge E. Vega}
\maketitle

\abstract Given a continuum $X$, let $C(X)$ be the hyperspace of all subcontinua of $X$. We consider the hyperspace $NC^{*}(X)=\{A\in C(X):X\setminus A$ is connected$\}$. In this paper we prove that the only locally connected continua $X$ for which $NC^{*}(X)$ is compact are the arcs and the simple closed curves. We also characterize the finite graphs $G$ for which $NC^{*}(G)$ is connected. 
\bigskip

2020 MSC: Primary 54F16; Secondary 54F15, 54F50, 54F65.

Key words and phrases: continuum, non-cut set, finite graph, hyperspace, tangle.

\section{Introduction}

A {\it continuum} is a compact connected metric space with more than one point. A {\it subcontinuum} of a continuum $X$ is a nonempty closed connected subset of $X$, so one-point sets in $X$ are subcontinua of $X$. Given a continuum $X$, we consider the following hyperspaces:

\begin{center}
$2^{X}=\{A\subset X:A$ is closed and nonempty$\}$,

$C(X)=\{A\in 2^{X}: A$ is connected$\}$,

$NC(X)=\{A\in C(X): X\setminus A$ is connected and $\operatorname{int}_{X}(A)=\emptyset\}$,  and

$NC^{*}(X)=\{A\in C(X): X\setminus A$ is connected$\}$.
\end{center}

These hyperspaces are endowed with the Hausdorff metric $H$ \cite[Definition 2.1 p.11]{in}, or equivalently with the Vietoris topology \cite[Definition 1.1 p.3]{in}.

The hyperspace $NC(X)$, called {\it the hyperspace of non-cut subcontinua}, was introduced in \cite{ees}. An {\it arc} is a continuum homeomorphic to the unit interval $[0,1]$. A {\it simple closed curve} is a continuum homeomorphic to the unit circle, centered at the origin, in the plane. A {\it finite graph} is a continuum that can be written as a finite union of arcs such that the intersection of every pair of them is finite.

The hyperspace $NC^{*}(X)$ was studied in \cite{hmmv} for the cases that $X$ is a finite graph or a dendrite. In \cite{hmmv}, among other results, it was shown:

\begin{itemize}

\item If $X$ is a dendrite (a locally connected continuum without simple closed curves) with its set of end-points dense, then $NC^{*}(X)$ is homeomorphic to the space of irrational numbers \cite[Theorem 5.7]{hmmv}.

\item If $G$ is a finite graph, then $NC^{*}(G)$ is locally connected \cite[Theorem 4.6]{hmmv}.

\item If $G$ is a finite graph, then $NC^{*}(G)$ is compact if and only if either $G$ is an arc or $G$ is a simple closed curve.
\end{itemize} 

In \cite{hmmv}, the authors also included the following question.

{\bf Question} \cite[Question 3.12]{hmmv}. Let $X$ be a locally connected continuum. If $X$ is homeomorphic to $NC^{*}(X)$, is $X$ an arc?

In this paper we prove (Theorem \ref{one}) that if $X$ is a locally connected continuum and $NC^{*}(X)$
is a closed subset of $C(X)$, then $X$ is either an arc or a simple closed
curve.
As a consequence we obtain a positive answer for the question above.

We also obtain a complete characterization of those finite graphs for which $NC^{*}(X)$ is connected.

\section{Peano continua}

\begin{theorem}\label{one} Let $X$ be a locally connected continuum. Suppose that $NC^{*}(X)$
is a closed subset of $C(X)$. Then $X$ is either an arc or a simple closed
curve.
\end{theorem}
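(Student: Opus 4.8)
Since $C(X)$ is compact, $NC^{*}(X)$ is closed in $C(X)$ if and only if it is compact, so it suffices to prove: if $X$ is a Peano continuum which is neither an arc nor a simple closed curve, then $NC^{*}(X)$ fails to be compact. The engine is a \emph{retraction observation}. Suppose $A\in C(X)$ has $X\setminus A$ disconnected and there is a point $a\in A$ that lies in the closure of every component of $X\setminus A$ and has arbitrarily small connected neighbourhoods $W$ in $A$ whose complement $A\setminus W$ is connected (this holds automatically when $A$ is a simple closed curve, or an arc with $a$ an end point of it). Put $A_{n}=A\setminus W_{n}$ with $W_{n}\to\{a\}$; then each $A_{n}\in C(X)$, $A_{n}\to A$, and $X\setminus A_{n}=(X\setminus A)\cup W_{n}$ is connected, because $W_{n}$ is connected and meets the closure of every component of $X\setminus A$ (at $a$), hence glues them all together. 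Thus $A_{n}\in NC^{*}(X)$ while $A\notin NC^{*}(X)$, so $NC^{*}(X)$ is not closed. The whole theorem therefore reduces to the claim that every Peano continuum which is not an arc or a simple closed curve contains such a pair $(A,a)$.

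By the classical fact that a Peano continuum all of whose points have order $\le 2$ is an arc or a simple closed curve, $X$ has a branch point. When $X$ is a dendrite (and not an arc) the pair is easy to exhibit: follow the unique arc issuing from any end point $e$ of $X$ until it first meets a branch point $w$; then $A=[e,w]$ is a free arc, $X\setminus A$ has exactly the (at least two) components determined by the remaining directions of $X$ at $w$, each of them clustering at $w$, and one takes $a=w$. (For the special case that $X$ is a finite graph one may instead simply invoke \cite{hmmv}, where it is shown that compactness of $NC^{*}(G)$ for a finite graph $G$ forces $G$ to be an arc or a simple closed curve.)

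The main obstacle is to produce the pair $(A,a)$ when $X$ contains a simple closed curve but is not one, and this is where the structure theory of Peano continua (the decomposition into cyclic elements) enters. If $X$ has no cut point one takes $A$ to be a carefully chosen simple closed curve whose removal disconnects $X$ (as for the $2$-sphere or for a wedge of finitely many circles), or, as the $\theta$-curve already shows, a carefully chosen \emph{arc} joining two branch points whose removal disconnects $X$; in general $X$ is built as a tree of cyclic elements and one combines the dendrite picture with the cyclic picture at a suitable branch point. In every case the delicate point --- the one I expect to absorb most of the work --- is to make the choice so that the selected point $a$ genuinely lies in the closure of \emph{all} of the complementary components of $A$, i.e.\ that no such component is screened off by the interior of $A$.
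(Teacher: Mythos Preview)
Your retraction observation is correct: given such a pair $(A,a)$, the sets $A_n=A\setminus W_n$ lie in $NC^{*}(X)$ and converge to $A\notin NC^{*}(X)$. The difficulty, as you yourself say, is producing $(A,a)$.

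Already in the dendrite case your construction is not quite right: ``follow the arc from an end point $e$ until it first meets a branch point $w$'' presupposes a \emph{first} branch point, which fails whenever branch points accumulate on $e$ (as in the Wa\.zewski universal dendrite). This is easily repaired---take any branch point $w$, let $A=\operatorname{cl}_X(C_1)$ for one component $C_1$ of $X\setminus\{w\}$, and set $a=w$; then $w$ is an end point of the sub-dendrite $A$ and the scheme goes through. The real gap is the general case. You invoke cyclic-element theory but give no construction, and for Peano continua with no cut points and no free arcs (the Sierpi\'nski carpet, the Menger curve) it is not at all obvious how to manufacture $A$ so that a single point $a$ lies in the closure of \emph{every} component of $X\setminus A$ while $A$ is still locally an arc or circle near $a$. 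As it stands the proposal is a program, not a proof.

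The paper avoids this case analysis entirely by using the closedness of $NC^{*}(X)$ in the opposite direction: rather than exhibit a limit point outside $NC^{*}(X)$, it uses closedness to produce \emph{maximal} elements. One first finds points $w,z$ with $X\setminus\{w,z\}$ connected and a non-cut point $y\notin\{w,z\}$, then builds $E\in C(X)$ with $y\in E$ and $X\setminus E$ the disjoint union of connected open neighbourhoods of $w$ and $z$. The family $\{A\in NC^{*}(X):y\in A\subset E\}$ is nonempty (it contains $\{y\}$), and because $NC^{*}(X)$ is closed, the Brouwer reduction theorem yields a maximal element $A_0$. But $A_0\subsetneq E$ (since $X\setminus E$ is disconnected), and a short order-arc argument enlarges $A_0$ to a strictly bigger element of $NC^{*}(X)$ still contained in $E$, contradicting maximality. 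This is uniform across all Peano continua and needs no structure theory beyond elementary facts; by contrast, your route would require a genuinely new lemma guaranteeing the existence of the pair $(A,a)$ in every non-degenerate cyclic Peano continuum.
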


\begin{proof}
Suppose to the contrary that $X$ is neither an arc nor a simple closed curve.
By Theorem 9.31 p.156 of \cite{n}, there exists two points $w,z\in X$ such that $w\neq
z$ and the open set $P=X\setminus \{w,z\}$ is connected. By Corollary 9.29 p.154 
of \cite{n}, there exists a point $y\in X\setminus \{w,z\}$ such that $\{y\}\in
NC^{*}(X)$. 

\smallskip

{\bf Claim 1.} There exist $E\in C(X)$ and open connected subsets $W,Z$ of
$X$ such that $w\in W$, $z\in Z$, $y\notin \operatorname{cl_{X}}(W\cup Z)$,
$\operatorname{cl_{X}}(W)\cap \operatorname{cl_{X}}(Z)=\emptyset$ and $X\setminus
E=W\cup Z$.

We prove Claim 1. Let $W_{0},Z_{0}$ be open subsets of $X$ such that $w\in
W_{0}$, $z\in Z_{0}$, $y\notin \operatorname{cl_{X}}(W_{0}\cup Z_{0})$ and
$\operatorname{cl_{X}}(W_{0})\cap \operatorname{cl_{X}}(Z_{0})=\emptyset$.

Let $E_{0}=X \setminus (W_{0}\cup Z_{0})$. For each $p\in E_{0}$, let $U_{p}$
be an open connected subset of $X$ such that $p\in U_{p}\subset \operatorname{cl_{X}}(U_{p})\subset
X\setminus \{w,z\}$. By the compactness of $E_{0}$, there exist $n\in\mathbb{N}$
and $p_{1},\ldots,p_{n}\in E_{0}$ such that $E_{0}\subset U_{p_{1}}\cup\cdots\cup
U_{p_{n}}$. Set $E_{1}= \operatorname{cl_{X}}(U_{p_{1}}\cup\cdots\cup U_{p_{n}})$.
Then $E_{1}$ is a closed subset of $X$, $E_{1}\cap \{w,z\}=\emptyset$, and
the number of components of $E_{1}$ is finite.

Since $E_{1}\subset P$ and $P$ is an open connected subset of $X$, the local
connectedness of $X$ implies that for every pair of indices $i,j\in \{1,\ldots,n\}$,
with $i\neq j$, there exists an arc $\alpha_{i,j}$ in $P$, connecting $p_{i}$
to $p_{j}$. Let 
\begin{center}
$E_{2}=E_{1}\cup\bigcup\{\alpha_{i,j}:i,j\in \{1,\ldots,n\}$ and $i\neq j\}$.
\end{center}   

Observe that $E_{2}$ is a subcontinuum of $X$ such that $E_{0}\subset E_{2}\subset
P$. Let $W$, $Z$ be the components of $X\setminus E_{2}$ such that $w\in
W$ and $z\in Z$. Then $W$ and $Z$ are open connected subsets of $X$ such
that $W\subset W_{0}$ and $Z\subset Z_{0}$. Let \begin{center}
$E=E_{2}\cup(\bigcup \{V:V$ is a component of $X\setminus E_{2}$ and $V\notin\{W,Z\}\})$.
\end{center}

Then $X\setminus E=W\cup Z$. Thus, $E$ is closed in $X$. Given a component
$V$ of $X\setminus E_{2}$, by \cite[Theorem 5.7, p.75]{n}, $\operatorname{cl_{X}}(V)\cap E_{2}\neq\emptyset$.
This implies that $E$ is a subcontinuum of $X$. This completes the proof
of Claim 1.

\bigskip

Let
\begin{center}
$\mathcal{A}=\{A\in NC^{*}(X):y\in A\subset E\}$.
\end{center}

{\bf Claim 2.} $\mathcal{A}$ contains maximal elements with respect to the inclusion.

In order to prove Claim 2, by the Brouwer's Reduction Theorem, we only need to show that if $\{A_{n}\}_{n=1}^{\infty}$ is a sequence of elements of $\mathcal{A}$ such
that $A_{1}\subset A_{2}\subset \cdots$, then there exists $A\in\mathcal{A}$ such that for each $n\in\mathbb{N}$, $A_{n}\subset A$. Set $A=\operatorname{cl_{X}(}\bigcup\{A_{n}:n\in\mathbb{N}\})=\operatorname{lim_{n\rightarrow
\infty}}A_{n}$. By hypothesis, $A\in NC^{*}(X)$, it follows that $A\in\mathcal{A}$.
Hence, we obtain that there exists a maximal element $A_{0}$ in $\mathcal{A}$.
Claim 2 is proved.
\bigskip

Since $X\setminus A_{0}$ is an open connected subset of $X$ and it contains
the points $w$ and $z$, there exists an arc $\alpha$ in $X\setminus A_{0}$
that joins $w$ to $z$. Since $X\setminus E=W\cup Z$ is disconnected, we have
that $A_{0}\subsetneq E$. Since $A_{0}\cap \alpha=\emptyset$, using an order
arc from $A_{0}$ to $E$, it is possible to construct a subcontinuum $B$ of
$E$ such that $A_{0}\subsetneq B\subset E\setminus \alpha$.
Since $X$ is locally connected, there exists an open connected subset $V$
of $X$ such that $\alpha\subset V\subset \operatorname{cl_{X}}(V)\subset
X\setminus B$. Let $Q=V\cup W\cup Z$. Then $Q$ is open and connected in $X$
and $Q\cap B=\emptyset$. Let $C$ be the component of $X\setminus Q$ that
contains $B$. By \cite[3.3 Ch. V, p.118]{d}, $C\in NC^{*}(X)$ and $A_{0}\subsetneq B \subset C\subset C\subset E$. This contradicts the maximality of $A_{0}$ and finishes the proof of the
theorem.
\end{proof}

As a consequence of Theorem \ref{one}, we obtain the following generalization of \cite[Corollary 3.10]{hmmv}, where this result was proved for finite graphs.

\begin{corollary}\label{two}
Let $X$ be a locally connected continuum. Then we have the following.

\noindent (1) $NC^{*}(X)$ is a continuum if and only if $X$ is either an arc or a simple closed curve.

\noindent (2) $NC^{*}(X)$ is homeomorphic to $X$ if and only if $X$ is an arc.

\noindent (3) $NC^{*}(X)=C(X)$ if and only if $X$ is a simple closed curve.
\end{corollary}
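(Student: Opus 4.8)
The plan is to deduce all three equivalences from Theorem \ref{one} together with explicit descriptions of $NC^{*}(X)$ in the two extremal cases, $X$ an arc and $X$ a simple closed curve. First I would record the two base computations. If $X=[0,1]$, a subcontinuum is a closed interval $[a,b]$ and $[0,1]\setminus[a,b]$ is connected exactly when $a=0$ or $b=1$; hence $NC^{*}([0,1])=\{[0,b]:b\in[0,1]\}\cup\{[a,1]:a\in[0,1]\}$, the union of two arcs meeting only at the point $[0,1]$, so it is an arc, in particular a continuum homeomorphic to $X$. If $X$ is a simple closed curve, every proper subcontinuum is a point or a proper sub-arc and in either case has connected complement, while $X$ itself has empty complement; hence $NC^{*}(X)=C(X)$, which is a $2$-cell by the classical computation of the hyperspace of a simple closed curve (see \cite{in}), so in particular a continuum. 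These facts immediately give the ``if'' directions of (1), (2) and (3).

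For the ``only if'' directions the common mechanism is that each hypothesis forces $NC^{*}(X)$ to be a closed subset of $C(X)$, so that Theorem \ref{one} applies and leaves only the candidates $X=[0,1]$ and $X$ a simple closed curve, which are then distinguished by the base computations. In (1): if $NC^{*}(X)$ is a continuum it is compact, hence closed in the metric space $C(X)$, so $X$ is an arc or a simple closed curve. In (2): if $NC^{*}(X)$ is homeomorphic to the continuum $X$ then $NC^{*}(X)$ is a continuum, so by (1) $X$ is an arc or a simple closed curve, and the simple closed curve is excluded because then $NC^{*}(X)=C(X)$ is a $2$-cell, not homeomorphic to a simple closed curve. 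In (3): if $NC^{*}(X)=C(X)$ then $NC^{*}(X)$ is trivially closed in $C(X)$, so $X$ is an arc or a simple closed curve, and the arc is excluded since $\{1/2\}\in C([0,1])$ has disconnected complement, so $NC^{*}([0,1])\subsetneq C([0,1])$.

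I do not expect a substantial obstacle once Theorem \ref{one} is available; the only points needing care are the two elementary base computations and the convention, implicit throughout the paper, that the empty set is connected (so that $X\in NC^{*}(X)$), which is exactly what makes $NC^{*}(X)=C(X)$ possible for a simple closed curve. With the opposite convention one would delete $X$ itself from each hyperspace throughout and the arguments would be unchanged.
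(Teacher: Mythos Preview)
Your proposal is correct and follows precisely the approach the paper intends: the corollary is stated without proof, simply as ``a consequence of Theorem~\ref{one},'' and your argument supplies exactly the routine verifications (the explicit computations of $NC^{*}$ for an arc and a simple closed curve, together with the observation that each hypothesis in (1)--(3) forces $NC^{*}(X)$ to be closed in $C(X)$) that make this consequence explicit.
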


It is easy to show that if $X$ is a compactification of the ray $[0,\infty)$, then $NC^{*}(X)$ is compact; if $X$ is an indecomposable continuum, then $NC^{*}(X)=C(X)$; and if $X$ the Warsaw circle, which is an arcwise decomposable continuum, then $NC^{*}(X)=C(X)$. Therefore, the hypothesis of local connectedness in (1) and (3) is necessary in Corollary \ref{two}.
We do not know if (2) holds without the hypothesis of local connectedness.
\begin{question}\label{three} 
Let $X$ be a continuum such that $NC^{*}(X)$ is homeomorphic to $X$. Is $X$ an arc?
\end{question}

\section{Connectedness of $NC^{*}(G)$, for a finite graph $G$}

A {\it simple triod} is a continuum homeomorphic to the cone over the discrete space containing exactly three points. The point that correspond to the top of the cone in a simple triod is called its vertex.  Given a finite graph $G$, a point $p\in G$ is a {\it ramification point} if is the vertex of a simple triod contained in $G$, and $p$ is an {\it end-point} of $G$ if there exists an arc $\alpha$ that is a neighborhood of $p$ in $G$ and $p$ is an end-point of $\alpha$. A {\it cycle} of $G$ is a simple closed
curve $S$ in $G$ such that there exists a point $p\in S$ satisfying that
$S\setminus \{p\}$ is open in $G$. A point $p\in G$ is a {\it vertex} of $G$ if $p$ is either a ramification point or an end-point of $G$. The set of vertices of $G$ is denoted by $V(G)$. An $\it edge$ of $G$ is the closure of a component of $G\setminus V(G)$. The set of edges of $G$ is denoted by $E(G)$. Observe that each edge of $G$ is either an arc or a simple closed curve. A {\it cut-point} of $G$ is a point $p\in G$ such that $G\setminus \{p\}$ is not connected (equivalently, $\{p\}\notin NC^{*}(G)$). A {\it tangle} is a finite graph without cut-points. Observe that if $G$ is a tangle and $G$ is not a simple closed curve, then $G$ does not contain end-points nor cycles. Given two points $p$ and $q$ in the finite graph $G$, the notation $pq$, will denote some arc in $G$ with end-points $p$ and $q$.

Consider the family $\mathcal{FT}$ of finite graphs $G$ that are of one of the following forms:
\begin{itemize}
\item{(A)} $G$ is an arc,
\item{(B)} $G$ is a tangle,
\item{(C)} $G$ is a tangle with a sticker (that is, $G=K\cup L$, where $K$ is a tangle, $J$ is an arc and $K\cap J=\{p\}$, where $p$ is an end-point of $J$),
\item{(D)} $G$ is the junction of two tangles (that is, $G=K\cup M$, where $K$ and $M$ are tangles and $K\cap M$ is a one-point set),
\item {(E)} $G$ is the union of two tangles joined by an arc (that is, $G=K\cup pq\cup M$, where $p$ and $q$ are points of $G$,\ $K$ and $M$ are tangles, $K\cap M=\emptyset$, $K\cap pq=\{p\}$ and $M\cap pq=\{q\}$).
\end{itemize}

The aim of this section is to prove that for a finite graph $G$, $NC^{*}(G)$ is connected if and only if $G$ belongs to $\mathcal{FT}$.

\begin{theorem} \label{four}
Let $G$ be a tangle and $p\in G$. Let $A\in NC^{*}(G)$ be such that $p\notin A$. Then there exists an path $\alpha$ in $NC^{*}(G)$ that joins $A$ to $G$ such that for each $B\in\operatorname{Im}(\alpha) \setminus \{G\}$, $p\notin B$.
\end{theorem}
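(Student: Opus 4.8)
The plan is to build $\alpha$ as a concatenation of two pieces, using a small neighbourhood of $p$ as a buffer. First I would choose an open connected neighbourhood $N$ of $p$ in $G$ with $\operatorname{cl}_G(N)\cap A=\emptyset$ and small enough that $G\setminus N'$ is connected for \emph{every} open connected $N'$ with $p\in N'\subseteq N$. That such $N$ exist is a routine local fact about tangles: $G\setminus\{p\}$ is connected, $p$ has at least two arcs of $G$ issuing from it, so for small radius the ``arms'' beyond that radius are joined by fixed compact arcs of $G\setminus\{p\}$ lying outside the ball, and every point away from $p$ reaches one of those arms. Put $A^{*}=G\setminus N$. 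Then $A^{*}\in NC^{*}(G)$ (its complement $N$ is connected), $A\subseteq A^{*}$ (since $\operatorname{cl}_G(N)\cap A=\emptyset$), and $p\notin A^{*}$. It then suffices to produce a path in $NC^{*}(G)$ from $A$ to $A^{*}$ all of whose members lie in $A^{*}$ (hence avoid $p$), followed by a path in $NC^{*}(G)$ from $A^{*}$ to $G$ all of whose members except $G$ avoid $p$; we may of course assume $A\neq G$. Reversing the concatenation, if necessary, gives the required $\alpha$.

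The second piece is easy. Take a decreasing family $\{N_{t}\}_{t\in[0,1)}$ of open connected neighbourhoods of $p$ with $N_{0}=N$, $\bigcap_{t}N_{t}=\{p\}$, and $G\setminus N_{t}$ connected for every $t$ (available by the choice of $N$, shrinking $N$ ``radially''). Set $B_{t}=G\setminus N_{t}$ for $t<1$ and $B_{1}=G$. Each $B_{t}$ is a subcontinuum with connected complement $N_{t}$, so $B_{t}\in NC^{*}(G)$, and $p\in N_{t}$ gives $p\notin B_{t}$ for $t<1$. Since $H(B_{t},G)\le\operatorname{diam}(N_{t})\to 0$ as $t\to 1$, the map $t\mapsto B_{t}$ is continuous on $[0,1]$, and it is the required path from $B_{0}=A^{*}$ to $B_{1}=G$.

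The first piece is the heart of the matter. Here $A$ and $A^{*}$ are finite graphs with $A\subseteq A^{*}$ and $A^{*}$ connected, and the idea is to grow $A$ up to $A^{*}$ by finitely many ``edge-slide'' moves, staying inside $A^{*}$ and keeping the complement (in $G$) connected throughout. The basic move: given $B\in NC^{*}(G)$ with $A\subseteq B\subsetneq A^{*}$, choose $v\in\operatorname{bd}_{A^{*}}(B)$ (nonempty, since $A^{*}$ is connected and $B$ is proper, closed and nonempty) and an edge $\beta$ of the graph $A^{*}$ issuing from $v$ in a direction not contained in $B$, and push $B$ along $\beta$: $B_{s}=B\cup\gamma_{s}$, where $\gamma_{s}$ is the subarc of $\beta$ from $v$ to a point $s$ of $\beta$, with $\gamma_{s}\cap B=\{v\}$. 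While $s$ stays in the interior of $\beta$, one checks that $G\setminus B_{s}$ is obtained from the connected set $G\setminus B$ by deleting the open arc $\gamma_{s}\setminus\{v,s\}$ — whose closure in $G\setminus B$ adds only the single point $s$, so the deletion leaves a connected set by the elementary separation fact ``removing from a connected space an open connected subset whose boundary is one point leaves a connected space'' — and then deleting $s$, which is harmless because in the set just obtained $s$ has only one surviving direction. Hence every $B_{s}\in NC^{*}(G)$, and $B_{s}\subseteq A^{*}$ so $p\notin B_{s}$. Thus from any such $B$ we get a path in the desired class to a strictly larger member, and since $A^{*}$ has finitely many edges, after finitely many well-chosen moves we should reach $A^{*}$.

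The main obstacle is exactly the phrase ``well-chosen''. Letting an edge-slide reach a \emph{vertex} $u$ of $A^{*}$ removes $u$ from the complement, and this can momentarily disconnect $G\setminus B$ if several arms of $G\setminus B$ meet only at $u$ (already visible in the theta-graph, where no single edge lies in $NC^{*}(G)$, so one can never absorb a whole edge at once but must keep the complement joined at the opposite vertex until it has shrunk to an arc). This is handled by ordering the absorptions so that no such bottleneck is ever created: keep the complement ``anchored'' near $p$ and peel $B$ inward from the side of $A$, completing a slide into a vertex $u$ only when at most one other arm of the complement still issues from $u$; concretely one may fix a spanning tree (or an ear decomposition) of $A^{*}$ and process its edges from the leaves inward, inserting each remaining edge at the moment both its endpoints have just been reached, the short ``stub'' edges at $\operatorname{bd}_G(N)$ being absorbed last. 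Verifying that a legal move is always available until $B=A^{*}$, and that the resulting finite concatenation of edge-slides is a path in $NC^{*}(G)$ contained in $A^{*}$, is the only place where real work is needed, and completes the first piece and hence the theorem.
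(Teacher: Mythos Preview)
Your two–stage plan (buffer $N$ around $p$, then grow $A$ to $A^{*}=G\setminus N$, then shrink $N$) is a reasonable reorganisation, and the second piece is fine. The gap is in the first piece, and it is not merely that you have deferred the ``real work'': the basic move you propose is too weak to do that work at all.

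Your basic move slides $B$ along a \emph{single} edge. Consider the theta graph: vertices $u,v$, three edges $e_{1},e_{2},e_{3}$, with $p$ in the interior of $e_{2}$ and $A=\{a\}$ a point in the interior of $e_{1}$. After you have grown $B$ to contain $u$ and pushed along $e_{1}$, $e_{3}$ and the $u$-side of $e_{2}$, the complement $G\setminus B$ near $v$ has \emph{three} arms: a stub of $e_{1}$, a stub of $e_{3}$, and the $v$-half of $e_{2}$ (which carries $N$). No ordering of single-edge slides can reduce this to your ``at most one other arm'' condition before $v$ is absorbed, because eliminating either stub already forces $v\in B$. If you slide any one arm into $v$, the other stub becomes an open arc with both endpoints in $B$, disconnecting $G\setminus B$. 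So the ear-decomposition/spanning-tree heuristic cannot rescue the argument: the obstruction is not the \emph{order} in which edges are processed but the need to absorb several arms into a vertex \emph{simultaneously}.

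This is precisely what the paper's proof does. It never reduces to single-edge slides; instead, for a carefully chosen vertex $w_{0}$ it pushes all the edge-pieces from $A$ to $w_{0}$ at the same rate via $\gamma(t)=A\cup a_{L_{1}}\gamma_{1}(t)\cup\cdots\cup a_{L_{m}}\gamma_{m}(t)$, so that they arrive at $w_{0}$ together. The substantive content of the paper's argument (Subclaim~1.1) is exactly the verification you postponed: showing that some vertex $w_{0}\in S^{*}$ exists for which this simultaneous absorption keeps $G\setminus\gamma(t)$ connected. Your buffer idea would still combine nicely with that argument, but as written the proposal lacks the multi-arm absorption step and therefore does not prove the theorem.
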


\begin{proof}
Since Theorem \ref{four} is immediate when $G$ is a simple closed curve, from now on we suppose that $G $ is not a simple closed curve. So, $G$ does not contain cycles and each edge of $G$ is an arc. 

Let $U=G\setminus A$. 

{\bf Case 1.} $U\cap V(G)=\emptyset$. 

Since $U$ is connected, we have that there exists an edge $L$ of $G$ such that $U\subset L$ and there exist points $a\neq b$ in $L$ such that $ab=\operatorname{cl}_{G}(U)$. Since $G$ is a tangle, $G$ does not have end-points. So, $p\notin\{a,b\}$. Then there exist one-to-one mappings $f,g:[0,1]\rightarrow L$  such that $f(0)=a$, $f(1)=p$, $g(0)=b$ and $g(1)=p$. Define $\alpha:[0,1]\rightarrow NC^{*}(G)$ by $\alpha(t)=A\cup af(t)\cup bg(t)$. Clearly, $\alpha$ has the required properties. 

From now on, by Case 1, we may assume that $U\cap V(G)\neq \emptyset$.

Let
\begin{center} 
$S=\{w\in V(G)\cap U:$ there exists $L \in E(G)$ such that $w\in L$ and $L\cap
A\neq\emptyset\}$.
\end{center}

For each $w\in S$, let 

\begin{center}
$D(w)=\bigcup\{L\in E(G): w\in L$ and $L\cap A\neq \emptyset\}$.
\end{center}

We claim that $S\neq\emptyset$. Fix an element $w\in U\cap V(G)$ and an element
$a\in A$. Choose an arc $wa$ in $G$. Let $w_{1}$ be the last element in
the arc $wa$, walking from $w$ to $a$, that belongs to $v(G)\cap U$. Let $a_{1}$ be the first element of $w_{1}a$, walking from $w_{1}$ to $a$, that belongs to $A$. Observe that $w_{1}a_{1}\setminus \{w_{1},a_{1}\}$ does not contain vertices of $G$, so it is contained in an
edge $L$ of $G$ such that $w_{1}\in L$ and $L\cap A\neq\emptyset$. Therefore, $w_{1}\in S$ and $S\neq\emptyset$.

{\bf Case 2.} $S$ is a one-point set.

Let $v$ be the unique point in $S$. We claim that $V(G)\subset A\cup \{v\}$.  Suppose to the contrary that there exists a point $w\in V(G)\setminus (A\cup \{v\})$, since $G\setminus \{v\}$ is arcwise connected, we can choose an arc $wa$ in $G\setminus \{v\}$. Proceeding as in the proof that $S\neq\emptyset$, we conclude that there exists an element $w_{1}\in S\cap wa$. Then $w_{1}\in S\setminus \{v\}$, a contradiction. We have shown that $V(G)\subset A\cup \{v\}$. 

Now, we claim that $G=A\cup D(v)$.
 Take point $q\in G\setminus(A\cup \{v\})$. Let $b,c\in V(G)$ be such that $q$ belongs to the edge $bc$. By the previous paragraph, $b,c\in A\cup \{v\}$. In the case that $b,c\in A$, since $U=G\setminus A$ is connected and $U\cap bc\neq\emptyset$, we obtain that $U\subset bc\setminus\{b,c\}$. So, $U\cap V(G)=\emptyset$, a contradiction. Hence, we may assume that $b\in A$ and $c\notin A$. Then $c\in S$, so $c=v$ and $q\in D(v)$. We have shown that $G=A\cup D(v)$. 

Given $L\in E(G)$ such that $v\in L$ and $L\cap A\neq \emptyset$, suppose that $L=vz$ for some $z\in V(G)$. If $z\notin A$, we have that $z\neq v$ and $z\in S$. Thus, $S\neq \{v\}$, a contradiction. We have proved that $z\in A$.

By the previous paragraph, there exist $m\in \mathbb{N}$ and $a_{1},\ldots,a_{m}\in A$ such that $\{L\in E(G):v\in L$ and $L\cap A\neq \emptyset\}=\{va_{1},\ldots,va_{m}\}$. If $m=1$, then $G=A\cup D(v)=A\cup va_{1}$. Let $a$ be the first point of $va_{1}$, walking from $v$ to $a_{1}$ such that $a\in A$. Then $G=va\cup aa_{1}\cup A$ and $va\cap
(aa_{1}\cup A)=\{a\}$. Moreover, since $v\notin A$ and $G$ is not an arc, we have that $a$ is a cut-point of $G$, a contradiction. Hence, $m\geq 2$. 

For each $i\in \{1,\ldots,m\}$ fix a one-to-one mapping $\alpha_{i}:[0,1]\rightarrow a_{i}v$ such that $\alpha_{i}(0)=a_{i}$ and $\alpha_{i}(1)=v$. 

{\bf Subcase 2.1.} $p\neq v$. 

Since $p\notin A$, we may assume that $p\in va_{1}\setminus \{v,a_{1}\}$.

Let $\sigma:[0,1]\rightarrow C(G)$ be defined as
\begin{center}
$\sigma(t)=A\cup a_{2}\alpha_{2}(t)\cup\cdots \cup a_{m}\alpha_{m}(t)$.
\end{center}

 It is easy to show that $\sigma$ is continuous, for each $t\in [0,1]$, $\sigma(t)\in NC^{*}(G)$ and $p\notin \sigma (t)$, and $U_{1}=G\setminus \sigma(1)$ is an open connected subset of $G$ such that $U_{1}\cap V(G)=\emptyset$ and $p\in U_{1}$. Thus, we can proceed as in Case 1 to obtain a path $\alpha_{0}$ in $NC^{*}(G)$ that joins $\sigma(1)$ to $G$
such that for each $B\in\operatorname{Im}(\alpha_{0}) \setminus \{G\}$, $p\notin B$.
Therefore, using the paths $\sigma$ and $\alpha_{0}$, it is possible to construct the required path $\alpha$.

{\bf Subcase 2.2} $v=p$.     

In this case, let $\alpha:[0,1]\rightarrow C(G)$ be defined as
\begin{center}
$\alpha(t)=A\cup a_{1}\alpha_{1}(t)\cup\cdots \cup a_{m}\alpha_{m}(t)$.
\end{center}
 
It is easy to check that, $\alpha$ has the required properties.
 
{\bf Case 3.} $S$ has more than one point.

For this case, we will need the following claim.

{\bf Claim 1.} If $S$ has $m$ points ($m\geq 2$), then there exist $w_{0}\in S$ and a mapping $\gamma:[0,1]\rightarrow NC^{*}(G)$ such that $\gamma(0)=A$, $A\cup\{w_{0}\}\subset \gamma(1)$ and for each $t\in [0,1]$, $p\notin \gamma(t)$.

Given $w\in S$, for each $L\in E(G)$ such that $w\in L$ and $L\cap A\neq \emptyset$, let $a_{L}$ be the first element in the arc $L$, going from $w$ to  the set $A$ that belongs to $A$. Let
\begin{center}
$D^{*}(w)=\bigcup\{wa_{L}\subset L:L\in E(G)$, $w\in L$ and $L\cap A\neq \emptyset\}$.
\end{center}

Observe that $D^{*}(w)\setminus A=\bigcup\{wa_{L}\setminus\{a_{L}\}\subset L:L\in E(G)$, $w\in L$ and $L\cap A\neq
\emptyset\}$, so $D^{*}(w)$ is connected.

Given $v,w\in S$ with $v\neq w$. we claim that $(D^{*}(v)\setminus A)\cap (D^{*}(w)\setminus A)=\emptyset$. Suppose to the contrary that there exist $L,J\in E(G)$ such that $w\in L$, $v\in J$, $L\cap A\neq \emptyset \neq J\cap A$ and a point $q\in (wa_{L}\setminus\{a_{L}\})\cup (va_{J}\setminus\{a_{J}\})$. Since $q\notin \{a_{L},a_{J}\}$, it follows that either $w=q=v$ or $L=J$ and $q\in wa_{L}\setminus\{a_{L}\}$. In both cases $w=v$, a contradiction.

Let $S^{*}=\{w\in S:p\notin D^{*}(w)\}$. Since $p\notin A$, the conclusion of the previous paragraph implies that $S^{*}\neq\emptyset$.

{\bf Subclaim 1.1.} There exists $w_{0}\in S^{*}$ such that $U\setminus D^{*}(w_{0})$ is connected. 

Suppose contrary to Subclaim 1.1, that for each $w\in S^{*}$, there exist two nonempty disjoint open subsets $Y_{w}$ and $Z_{w}$ of $G$ such that $U\setminus D^{*}(w)=Y_{w}\cup Z_{w}$. Since $p\notin D^{*}(w)$, we assume that $p\in Z_{w}$. So, $p\notin Y_{w}$.

Choose and fix an element $w_{1}\in S^{*}$. 

{\bf Subclaim 1.1.1.} $Y_{w_{1}}\cap S^{*}\neq \emptyset$.

In order to prove this subclaim, take an element $q\in Y_{w_{1}}$. Since $w_{1}$ is not a cut-point of $G$, there exists an arc $qa$ in $G\setminus\{w_{1}\}$, where $a\in A$. We may assume that $qa\cap A=\{a\}$. Then $qa\setminus\{a\}\subset U$. 

If there exists a point $x\in(qa\setminus\{a\})\cap D^{*}(w_{1})$, then there exists $L\in E(G)$ such that $w_{1}\in L$, $L\cap A\neq\emptyset$ and $x\in w_{1}a_{L}\setminus\{a_{L}\}$. Since $x\neq w_{1}$, we have that $x$ is a point in the edge $L$ and $x$ is not an end-point of $L$. Since the only way the arc $qx$ can enter to the arc $w_{1}a_{L}$ is by $w_{1}$ or by $a_{L}$, we have that either the arc $qx$ is contained in $w_{1}a_{L}$ or $w_{1}\in qx$ or $a_{L}\in qx$. This is impossible since $q\notin D^{*}(w_{1})$, $w_{1}\notin qa$ and $qx\cap A = \emptyset$.
We have shown that $qa\setminus\{a\}$ is a connected subset of $U\setminus D^{*}(w_{1})$. Thus, $qa\setminus \{a\}\subset Y_{w_{1}}$. 

In the case that $(qa\setminus\{a\})\cap V(G)\neq\emptyset$, we can take the last element $w$ in $qa\setminus\{a\}$, going from $q$ to $a$, that belongs to $V(G)$. Then $w\in S\cap Y_{w_{1}}$. Since $D^{*}(w)\setminus A$ is a connected subset of $U\setminus D^{*}(w_{1})$ that intersects $Y_{w_{1}}$, we conclude that $p\notin D^{*}(w)$. Therefore, $w\in Y_{w_{1}}\cap S^{*}$ and we are done. 

Suppose then that $(qa\setminus\{a\})\cap V(G)=\emptyset$. Then there exists $J\in E(G)$ such that, if $J=yz$, then\ $qa\setminus\{a\}\subset yz\setminus \{y,z\}$ and $q\in ya$. If there exists a point $a_{1}\in(ya\setminus \{a\})\cap A$, then $a_{1}\notin qa$. Thus, $q\in a_{1}a\cap U\subset J\cap U$. The connectedness of $U$ implies that $U\subset a_{1}a\subset J$. Hence, $U\cap V(G)=\emptyset$, contrary to our assumption. This proves that $(ya\setminus \{a\})\cap
A=\emptyset$ and $ya\setminus \{a\}\subset U$. 

We check that $(ya\setminus\{a\})\cap D^{*}(w_{1})=\emptyset$. Suppose to the contrary that there exist a point $z\in  ya\setminus\{a\}$ and an edge $L\in E(G)$ such that  $w_{1}\in L$, $L\cap A\neq\emptyset$ and $z\in w_{1}a_{L}\setminus\{a_{L}\}$.
Observe that $y$ and $w_{1}$ are vertices of $G$, and $yz$ and $w_{1}z$ are proper (possibly degenerate) subarcs of edges of $G$. This implies that $y=w_{1}$. Then the edge $J$ satisfies $y=w_{1}\in J$ and $a\in J\cap A$. By the previous paragraph, $a=a_{J}$. Hence, $q\in ya=w_{1}a_{J}\subset D^{*}(w_{1})$, contradicting the fact that $q\in Y_{w_{1}}$. This completes the proof that $(ya\setminus\{a\})\cap D^{*}(w_{1})=\emptyset$.

We have obtained that $ya\setminus\{a\}$ is a connected subset of $U\setminus  D^{*}(w_{1})$ and intersects $Y_{w_{1}}$. Hence, $ya\setminus\{a\}\subset Y_{w_{1}}$. Therefore, $y\in S$ and $y\neq w_{1}$. This implies that $(D^{*}(y)\setminus A)\cap
(D^{*}(w_{1})\setminus A)=\emptyset$. Hence, $D^{*}(y)\setminus
A$ is a connected subset of $U\setminus D^{*}(w_{1})$ and intersects $Y_{w_{1}}$. We conclude that $D^{*}(y)\setminus A\subset Y_{w_{1}}$. Since $p\notin Y_{w_{1}}\cup A$, we conclude that $p\notin D^{*}(y)$. Therefore, $y\in Y_{w_{1}}\cap S^{*}$.
This completes the proof that $Y_{w_{1}}\cap S^{*}\neq \emptyset$.
Thus, Subclaim 1.1.1. is proved.

Choose and fix an element $w_{2}\in Y_{w_{1}}\cap S^{*}$. Then $w_{2}\neq w_{1}$, so $(D^{*}(w_{2})\setminus
A)\cap
(D^{*}(w_{1})\setminus A)=\emptyset$. Thus, $D^{*}(w_{2})\setminus
A$ is a connected subset of $U\setminus D^{*}(w_{1 })$ and intersects $Y_{w_{1}}$. Then $D^{*}(w_{2})\setminus A\subset Y_{w_{1}}$ and $(D^{*}(w_{1})\setminus A)\cup Z_{w_{1}}\subset U\setminus D^{*}(w_{2})=Y_{w_{2}}\cup Z_{w_{2}}$. Observe that $D^{*}(w_{1})\setminus A$ is a connected subset of the connected set  $U$ and $U\setminus (D^{*}(w_{1})\setminus A)=U\setminus D^{*}(w_{1})=Y_{w_{1}}\cup Z_{w_{1}}$. By \cite[Proposition  6.3, p.88]{n}, we obtain that $(D^{*}(w_{1})\setminus A)\cup Z_{w_{1}}$ is a connected subset of $Y_{w_{2}}\cup Z_{w_{2}}$ that contains the point $p\in Z_{w_{2}}$. Thus, $(D^{*}(w_{1})\setminus A)\cup Z_{w_{1}}\subset Z_{w_{2}}$ and $Y_{w_{2}}\subset U\setminus Z_{w_{2}}\subset U\setminus ((D^{*}(w_{1})\setminus A)\cup Z_{w_{1}})\subset Y_{w_{1}}$. Therefore, $Y_{w_{2}}\subset Y_{w_{1}}$.

Proceeding with $w_{2}$ as we did with $w_{1}$, we can prove that there exists an element $w_{3}\in Y_{w_{2}}\cap S^{*}$ and $Y_{w_{3}}\subset Y_{w_{2}}\subset Y_{w_{1}}$. Hence, $w_{3}\notin\{w_{1},w_{2}\}$. 

This procedure can be repeated to obtain a sequence $\{w_{k}\}_{k=1}^{\infty}$ in $S^{*}$ such that for each $k\in\mathbb{N}$, $w_{k+1}\in Y_{w_{k}}$ and $Y_{w_{k+1}}\subset Y_{w_{k}}$. Since for each $k\in\mathbb{N}$, $w_{k}\notin Y_{w_{k}}$, we conclude that the points $w_{k}$ are pairwise distinct.
This contradicts the fact that $S^{*}$ is finite and completes the proof of Subclaim 1.1. is proved.

According Subclaim 1.1, we can choose and fix $w_{0}\in S^{*}$ such that $U\setminus D^{*}
(w_{0})$ is connected.

Suppose that $L_{1},\ldots,L_{m}$ are the edges of $G$ such that for each $i\in\{1,\ldots,m\}$, $w_{0}\in L_{i}$ and $A\cap L_{i}\neq\emptyset$. Given $i\in \{1,\ldots,m\}$, let $\gamma_{i}:[0,1]\rightarrow a_{L_{i}}w_{0}$ be a one-to-one mapping such that $\gamma_{i}(0)=a_{L_{i}}$ and $\gamma_{i}(1)=w_{0}$.
Define $\gamma:[0,1]\rightarrow C(G)$ by:
\begin{center}
$\gamma(t)=A\cup a_{L_{1}}\gamma_{1}(t)\cup\cdots\cup a_{L_{m}}\gamma_{m}(t)$.
\end{center}

Clearly, $\gamma$ is a one-to-one continuous function,  $A\cup \{w_{0}\}\subset A\cup D^{*}(w_{0})=\gamma(1)$ and $\gamma(0)=A$. Since for each $t\in [0,1]$, $\gamma(t)\subset A\cup D^{*}(w_{0})$, we have that $p\notin\gamma(t)$. Then we only left to prove that for each $t\in [0,1]$, $G\setminus\gamma(t)$ is connected.
Let $t\in [0,1]$. 

Since $U=G\setminus A$ is connected and $p,w_{0}\in U$, there exists an arc $pw_{0}$ in $U$. Inside $pw_{0}$ we can take a point $q_{0}\in D^{*}(w_{0})\cap pw_{0}$ such that the subarc $pq_{0}$ satisfies that $D^{*}(w_{0})\cap pq_{0}=\{q_{0}\}$. Then there exists $L\in E(G)$ such that $w_{0}\in L$, $L\cap A\neq\emptyset$ and $q_{0}\in w_{0}a_{L}$. Since $p\notin D^{*}(w_{0})$ and the only way to enter to the arc $w_{0}a_{L}$ is by one of the points $w_{0}$ or $a_{L}$ and $q_{0}\notin A$, we conclude that $q_{0}=w_{0}$.

If $t=1$, then $G\setminus \gamma(t)=G\setminus(A\cup D^{*}(w_{0}))=U\setminus D^{*}(w_{0})$ is connected. So, suppose that $t<1$. In this case, 
\begin{center}
$G\setminus \gamma(t)=(U\setminus D^{*}(w_{0}))\cup (w_{0}\gamma_{1}(t)\setminus \{\gamma_{1}(t)\})\cup\cdots\cup (w_{0}\gamma_{m}(t)\setminus \{\gamma_{m}(t)\})$.   
\end{center}

Thus, $G\setminus\gamma(t)$ is the union of connected sets and each one intersects the connected set $pw_{0}\subset G\setminus \gamma(t)$. Therefore, $G\setminus \gamma(t)$ is connected and $\gamma(t)\in NC^{*}(G)$. This completes the proof of Claim 1.

\bigskip

Set $A_{1}=\gamma(1)$. We can define the respective sets $U_{1}$ ($U_{1}=G\setminus A_{1}$); $S_{1}\subset E(G)\setminus A_{1}$; and for each $w\in S_{1}$, the respective set $D_{1}(w)$. 

Proceeding with the set $A_{1}$ as we did with the set $A$, we can find the required path $\alpha$ for all cases, except for the case that $S_{1}$ has more than one point. So, we can apply again Claim 1 (to $A_{1}$, $U_{1}$ and $S_{1}$) to obtain a corresponding point $w_{0}^{(1)}$ and a corresponding mapping $\gamma^{(1)}$. Then we can define $A_{2}=\gamma^{(1)}(1)$, and continue proceeding in a similar way. This procedure finishes since the points we are obtaining $w_{0},w_{0}^{(1)},w_{0}^{(2)},\ldots$ are pairwise distinct and they are elements of $V(G)$. This ends the proof of Theorem \ref{four}.     
\end{proof}

\begin{lemma}\label{five}
Let $X$ be a continuum. Suppose that there exists a point $p\in X$ such
that $X\setminus \{p\}$ has at least three components. Then $NC^{*}(X)$ is
not connected. 
\end{lemma}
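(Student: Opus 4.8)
The plan is to separate $NC^{*}(X)$ according to whether a subcontinuum contains $p$. Put $\mathcal{L}=\{A\in NC^{*}(X):p\in A\}$ and $\mathcal{S}=\{A\in NC^{*}(X):p\notin A\}$; these partition $NC^{*}(X)$, and I want to prove both are nonempty and clopen, which forces $NC^{*}(X)$ to be disconnected. First I would fix the structure near $p$: since $X$ is nondegenerate, $p$ is not isolated, so $X\setminus\{p\}$ is dense and $\operatorname{bd}_{X}(X\setminus\{p\})=\{p\}$. Each component $C$ of $X\setminus\{p\}$ is closed in $X\setminus\{p\}$, so $\operatorname{cl}_{X}(C)\cap(X\setminus\{p\})=C$, and by the boundary bumping theorem \cite[Theorem 5.7, p.75]{n} we have $p\in\operatorname{cl}_{X}(C)$; hence $\operatorname{cl}_{X}(C)=C\cup\{p\}$, this set is a nondegenerate subcontinuum, distinct components meet only at $p$, and $X\setminus C=\bigcup\{\operatorname{cl}_{X}(C'):C'\neq C\}$ is a subcontinuum of $X$ (a union of subcontinua through $p$, and nonempty because $X\setminus\{p\}$ has at least three components).

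The easy parts come first. We have $X\in NC^{*}(X)$ and $p\in X$, so $X\in\mathcal{L}$; and since then $X\notin\mathcal{S}$, it suffices to see that $\mathcal{S}\neq\emptyset$ and that $\mathcal{L}$ and $\mathcal{S}$ are closed. For $\mathcal{S}\neq\emptyset$, fix a component $C$, take a non-cut point $y\neq p$ of the subcontinuum $\operatorname{cl}_{X}(C)$, and observe that $X\setminus\{y\}=(\operatorname{cl}_{X}(C)\setminus\{y\})\cup(X\setminus C)$ is a union of two connected sets, both containing $p$, hence connected, so $\{y\}\in\mathcal{S}$. That $\mathcal{L}$ is closed in $NC^{*}(X)$ is immediate: if $A_{n}\to A$ in the Hausdorff metric and $p\in A_{n}$ for all $n$, then $\operatorname{dist}(p,A)=\lim\operatorname{dist}(p,A_{n})=0$, so $p\in A$. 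Thus everything reduces to showing $\mathcal{S}$ is closed.

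For that, the crux, I would use two structural observations. (i) If $p\notin A\in NC^{*}(X)$, then $A$, being connected and missing $p$, lies in a single component of $X\setminus\{p\}$. (ii) If $p\in A\in NC^{*}(X)$, then $X\setminus A$ is connected and misses $p$, so either $A=X$ or $X\setminus A$ lies in a single component $C_{m}$ and $\operatorname{cl}_{X}(C)\subseteq A$ for every component $C\neq C_{m}$; in either case, since there are at least three components, $A$ contains the closures $\operatorname{cl}_{X}(C')$ and $\operatorname{cl}_{X}(C'')$ of two distinct components $C',C''$. Now suppose $A_{n}\in\mathcal{S}$, $A_{n}\to A\in NC^{*}(X)$, and, for a contradiction, that $p\in A$. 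By (i) each $A_{n}$ sits in a component $D_{n}$; by (ii) fix such $C',C''$. For every $n$, $\operatorname{cl}_{X}(C')\cup\operatorname{cl}_{X}(C'')\subseteq A\subseteq N_{\varepsilon_{n}}(D_{n})$ with $\varepsilon_{n}=H(A_{n},A)\to 0$. If a fixed component $C_{0}$ were to equal $D_{n}$ for infinitely many $n$, then choosing whichever of $C',C''$ differs from $C_{0}$, its closure would lie within $\varepsilon_{n}$ of $C_{0}$ for those $n$, hence inside $\operatorname{cl}_{X}(C_{0})=C_{0}\cup\{p\}$, forcing that component to be empty — impossible. So, passing to a subsequence, the $D_{n}$ are pairwise distinct, and then — using $\operatorname{cl}_{X}(D_{n})\supseteq A_{n}\to A$, together with the fact that the exceptional component $C_{m}$ in (ii), if present, must have nonempty interior in $X$ (because $X\setminus A$ is a nonempty open subset of it) — one shows that $A$ is forced to be $\operatorname{cl}_{X}(D)$ for a single component $D$, or a space filled in the limit by closures of distinct components; either way this contradicts $A\in NC^{*}(X)$, since a single component's closure is never in $NC^{*}(X)$ when $X\setminus\{p\}$ has at least three components.

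The step I expect to be the main obstacle is precisely this closedness of $\mathcal{S}$. In a general continuum the components of $X\setminus\{p\}$ need not be open in $X$, so "$H$-close to a component" does not imply "contained in that component", and the naive neighborhood argument breaks. The collapsing device above — playing a wandering sequence of containing components $D_{n}$ against two fixed components whose closures every limit in $\mathcal{L}$ must swallow, and exploiting that at most one "exceptional" component can escape a given $A\in\mathcal{L}$ and that such an exceptional component is then somewhere thick — is what must replace local connectedness here; making the final "the limit is squeezed out of $NC^{*}(X)$" step precise, in particular the bookkeeping when $X\setminus\{p\}$ has infinitely many components, is the delicate part of the argument.
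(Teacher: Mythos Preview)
Your partition $NC^{*}(X)=\mathcal{L}\cup\mathcal{S}$ is a reasonable idea and your structural observations (i) and (ii) are correct, but the proof has a real gap exactly where you flag it: the closedness of $\mathcal{S}$ when the containing components $D_{n}$ are pairwise distinct. Your sketch there (``$A$ is forced to be $\operatorname{cl}_{X}(D)$ for a single component $D$, or a space filled in the limit by closures of distinct components'') does not lead to a contradiction as stated; nothing you have written rules out a sequence of distinct components $D_{n}$ whose closures become arbitrarily dense in $A$. The ``bookkeeping'' you defer is not just bookkeeping---it is the whole point.

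The missing idea, and the paper's approach, is to stop working with individual components (which need not be open) and instead write $X\setminus\{p\}=U_{1}\cup U_{2}\cup U_{3}$ as a union of three pairwise disjoint nonempty \emph{open} sets; this is always possible once there are at least three components (split once, then split the piece containing two components again). With this in hand the paper bypasses your $\mathcal{L}/\mathcal{S}$ partition entirely: set $\mathcal{W}=\{A\in NC^{*}(X):A\subset U_{1}\}$, which is open in $NC^{*}(X)$ because $U_{1}$ is open, nonempty because $\operatorname{cl}_{X}(U_{1})=U_{1}\cup\{p\}$ is a nondegenerate continuum and hence has a non-cut point in $U_{1}$, and proper since it misses $X$. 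For closedness, any $B\in\operatorname{cl}_{NC^{*}(X)}(\mathcal{W})\setminus\mathcal{W}$ lies in $\operatorname{cl}_{X}(U_{1})=U_{1}\cup\{p\}$ and contains $p$, so $X\setminus B\supset U_{2}\cup U_{3}$ is disconnected---contradiction. Note that the same three-open-set decomposition would also close the hole in \emph{your} argument (pigeonhole the $A_{n}$ into some $U_{j}$, then $A\subset U_{j}\cup\{p\}$ and $X\setminus A\supset U_{k}\cup U_{l}$), but once you see it, the paper's direct route via $\mathcal{W}$ is strictly shorter.
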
 

\begin{proof}
By hypothesis, there exist nonempty pairwise disjoint open subsets $U_{1}$, $U_{2}$ and $U_{3}$ of $X$ such that $X\setminus \{p\} =U_{1}\cup U_{2}\cup U_{3}$. By \cite[Theorem 6.6, p.89]{n}, for each $i\in\{1,2,3\}$, there exists a point $p_{i}\in U_{i}$ such that $\{p_{i}\}\in NC^{*}(X)$.

Let $\mathcal{W}=\{A\in NC^{*}(X):A\subset U_{1}\}$. Then $\mathcal{W}$ is an open nonempty proper subset of $NC^{*}(X)$. We claim that $\mathcal{W}$ is closed in $NC^{*}(X)$. Suppose to the contrary that there exists $B\in\operatorname{cl}_{NC^{*}(X)}(\mathcal{W})\setminus \mathcal{W}\subset \operatorname{cl}_{C(X)}(\mathcal{W})$. Then $B\subset \operatorname{cl}_{X}(U_{1})=U_{1}\cup\{p\}$ and $B\nsubseteq U_{1}$. Thus, $X\setminus B=U_{2}\cup U_{3}\cup (U_{1}\setminus B)$ is not connected. This contradicts the fact that $B\in NC^{*}(X)$. We have shown that $\mathcal{W}$
is closed in $NC^{*}(X)$. Therefore, $NC^{*}(X)$ is not connected.  
\end{proof}

\begin{theorem}\label{six}
Let $G$ be a finite graph. Then the following are equivalent:
\begin{itemize}
\item{(a)} $G\in\mathcal{FT}$,
\item{(b)} $NC^{*}(G)$ is arcwise connected, and 
\item{(c)} $NC^{*}(G)$ is connected.
\end{itemize}
\end{theorem}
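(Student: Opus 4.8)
\textbf{Proof strategy for Theorem \ref{six}.}

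The plan is to prove the cycle of implications $(a)\Rightarrow(b)\Rightarrow(c)\Rightarrow(a)$. The implication $(b)\Rightarrow(c)$ is trivial since arcwise connected spaces are connected, so the real work lies in $(a)\Rightarrow(b)$ and in the contrapositive of $(c)\Rightarrow(a)$.

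For $(a)\Rightarrow(b)$, I would argue case by case on the five forms (A)--(E) defining $\mathcal{FT}$, in each case showing that an arbitrary $A\in NC^{*}(G)$ can be joined by a path in $NC^{*}(G)$ to a fixed ``large'' element, so that $NC^{*}(G)$ is arcwise connected. The engine for this is Theorem \ref{four}: inside any tangle $K$, given a point $p\in K$ and an element $A\in NC^{*}(K)$ with $p\notin A$, one can push $A$ monotonically out to all of $K$ while avoiding $p$. Case (A) is the classical fact that $NC^{*}([0,1])=C([0,1])$ is an arcwise connected (indeed a $2$-cell). Case (B) is essentially Theorem \ref{four} with $G$ a tangle: given $A\in NC^{*}(G)$, pick $p\notin A$ (possible unless $A=G$) and run the theorem to reach $G$; this makes $NC^{*}(G)$ arcwise connected. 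For (C), (D), (E), the idea is to decompose $G$ into its tangle pieces $K$, $M$ (and, for (E), the connecting arc $pq$), show that any $A\in NC^{*}(G)$ meets each tangle in a set whose complement within that tangle is connected, and then use Theorem \ref{four} inside each tangle separately (anchored at the gluing point, or at $p$ and $q$ in case (E)) together with the arc case on the sticker/connecting arc, concatenating the resulting paths. The verification that the intermediate sets actually lie in $NC^{*}(G)$ --- i.e. that the complement in all of $G$ stays connected while we manipulate one piece at a time --- is where the bookkeeping is heaviest; the key point is that the piece we are not touching, together with its attachment point, forms a connected set that every intermediate complement meets.

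For the contrapositive $\neg(a)\Rightarrow\neg(c)$, I would show that a finite graph $G$ not in $\mathcal{FT}$ admits a disconnection of $NC^{*}(G)$. Here Lemma \ref{five} is the main tool: if some point of $G$ disconnects $G$ into at least three components, then $NC^{*}(G)$ is disconnected. So the crux is a structural dichotomy: a connected finite graph either (i) has a cut-point whose removal leaves $\geq 3$ components, or (ii) has all of its cut-points ``of order two'', in which case the cut-points and the maximal cut-point-free subgraphs organize $G$ into a tree-like pattern of tangles linked by arcs, and one checks that the only such patterns are exactly (A)--(E) of $\mathcal{FT}$. Concretely, let $T$ be the set of cut-points of $G$; the cut-points of order two partition $G$ into tangles (the cut-point-free maximal subcontinua) and arcs, giving a block-cut-point tree structure, and if $G\notin\mathcal{FT}$ either some cut-point has order $\geq 3$ (apply Lemma \ref{five}) or the block tree has $\geq 3$ leaves or a block that is neither a tangle nor an edge-arc, from which one again locates a separating triple of ``directions'' and disconnects $NC^{*}(G)$ by a Lemma \ref{five}-type argument (partitioning $NC^{*}(G)$ according to which of three prescribed regions a continuum lands in).

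The main obstacle I expect is the combinatorial classification step in $\neg(a)\Rightarrow\neg(c)$: translating ``$G$ has only order-two cut-points and is not one of the five shapes'' into the existence of a concrete separation of $NC^{*}(G)$. The clean way to handle it is to reduce everything to Lemma \ref{five} by showing that if $G\notin\mathcal{FT}$ then, after collapsing each tangle block to a point, the resulting tree either has a vertex of degree $\geq 3$ (giving three disjoint ``escape regions'' for subcontinua and hence a clopen split of $NC^{*}(G)$ exactly as in the proof of Lemma \ref{five}) or else is a single arc whose blocks force $G$ into form (A)--(E) after all --- contradiction. I would isolate the degree-$\geq 3$ disconnection as a standalone lemma generalizing Lemma \ref{five} (replacing ``point'' by ``subcontinuum'' as the separating object), since the proof is identical: the family $\{A\in NC^{*}(G): A\subset U_1\}$ for one of the three regions $U_1$ is clopen and proper.
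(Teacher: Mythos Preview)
Your plan for $(a)\Rightarrow(b)$ matches the paper's: treat the five shapes separately, inside each tangle piece invoke Theorem~\ref{four} to push $A$ out to the whole tangle while avoiding the attaching point, and concatenate with ``slide along the arc'' paths on the sticker or connecting segment. The paper only writes out case~(E) in full and declares the rest analogous, so here you are on the same track.

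The gap is in your contrapositive $\neg(a)\Rightarrow\neg(c)$. The dichotomy you state --- either some cut-point has order $\ge 3$, or the collapsed block tree has a vertex of degree $\ge 3$ (or $\ge 3$ leaves), or else $G\in\mathcal{FT}$ --- is false. Take three tangles $K_{1},K_{2},K_{3}$ with $K_{1}\cap K_{2}=\{p\}$, $K_{2}\cap K_{3}=\{q\}$, $K_{1}\cap K_{3}=\emptyset$, $p\ne q$. Both cut-points $p,q$ separate $G$ into exactly two pieces, and the block--cut tree $K_{1}\!-\!p\!-\!K_{2}\!-\!q\!-\!K_{3}$ is a path with two leaves and no vertex of degree $\ge 3$; yet $G\notin\mathcal{FT}$. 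Your ``generalized Lemma~\ref{five} with a subcontinuum separator'' also does not rescue this: if you try the clopen set $\{A\in NC^{*}(G):A\subset K_{1}\setminus\{p\}\}$, its closure in $NC^{*}(G)$ contains $K_{1}$ itself (since $G\setminus K_{1}=(K_{2}\setminus\{p\})\cup K_{3}$ is connected), so the set is not closed.

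The paper resolves $(c)\Rightarrow(a)$ by a different, more direct route that uses connectedness of $NC^{*}(G)$ beyond Lemma~\ref{five}. Given two cut-points $p,q$, it isolates the ``middle'' continuum $T=\{p\}\cup(V\cap Y)\cup\{q\}$ between them and shows $T$ must be an arc with end-points $p,q$: if $T$ had a non-cut point $x\notin\{p,q\}$, then $x$ would also be a non-cut point of $G$, and $\mathcal{W}=\{A\in NC^{*}(G):A\subset V\cap Y\}$ would be a nonempty proper clopen subset of $NC^{*}(G)$ (closed because any limit $A\subset T$ meeting $p$ or $q$ leaves $U$ and $Z$ separated in its complement). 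From this it follows that all cut-points lie on a single edge $p_{0}q_{0}$, and $G=D\cup p_{0}q_{0}\cup E$ with $D,E$ each degenerate or a tangle --- precisely the list $\mathcal{FT}$. The clopen argument on the region \emph{between two cut-points} (rather than around one separating subcontinuum) is the missing idea; it is exactly what kills the three-tangle chain and its relatives.
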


\begin{proof}
$(a) \Rightarrow (b)$. Take $G\in \mathcal{FT}$. We only consider the case that{} $G$ is the union of two tangles joined by an arc. The other cases can be treated in a similar way. Suppose then that $G=K\cup
pq\cup M$, where $p$ and $q$ are points of $G$,\ $K$ and $M$ are tangles,
$K\cap M=\emptyset$, $K\cap pq=\{p\}$ and $M\cap pq=\{q\}$. 
 
Take an element $A\in NC^{*}(G)$. We are going to show that there exists a path $\sigma:[0,1]\rightarrow NC^{*}(G)$ such that $\sigma(0)=A$ and $\sigma(1)= G$. We consider three cases.

{\bf Case 1.} $A\subset K\setminus\{p\}$.

Since $K$ is a tangle, by Theorem \ref{four}, there exists a mapping $\sigma_{1}:[0,1]\rightarrow NC^{*}(K)$ such that $\sigma_{1}(0)=A$, $\sigma_{1}(1)=K$ and for each $t\in [0,1)$, $p\notin \sigma_{1}(t)$.

Given $t\in[0,1)$, $G\setminus \sigma_{1}(t)=M\cup L\cup (K\setminus \sigma_{1}(t))$ is connected. Moreover, $G\setminus K=M\cup (L\setminus \{p\})$ is also connected. Therefore, for each $t\in [0,1]$, $\sigma_{1}(t)\in NC^{*}(G)$.

Let $\gamma:[0,1]\rightarrow L$ be a one-to-one mapping such that $\gamma(0)=p$ and $\gamma(1)=q$. Let $\sigma_{2}:[0,1]\rightarrow C(G)$ be given by $\sigma_{2}(t)=K\cup p\gamma(t)$. Then $\sigma_{2}$ is continuous, $\sigma_{2}(0)=K$ and $\sigma_{2}(1)=K\cup L$ . Given $t\in [0,1)$, $G\setminus \sigma_{2}(t)=M\cup(\gamma(t)q\setminus\{\gamma(t)\})$ is connected. Moreover, $G\setminus \sigma_{2}(1)=M\setminus\{q\}$ is connected, since $M$ is a tangle. Therefore, for each $t\in [0,1]$, $\sigma_{2}(t)\in NC^{*}(G)$.

Apply again Theorem \ref{four}, to obtain a mapping $\lambda:[0,1]\rightarrow NC^{*}(M)$ such that $\lambda(0)=\{p\}$ and $\lambda(1)=M$. Let $\sigma_{3}:[0,1]\rightarrow C(G)$ be given by $\sigma_{3}(t)=K\cup L\cup\lambda(t)$. Then $\sigma_{3}(0)=K\cup L$, $\sigma_{3}(1)=G$ and for each $t\in [0,1]$, $G\setminus \sigma_{3}(t)=M\setminus \lambda(t)$ is connected. Therefore, $\sigma_{3}(t)\in NC^{*}(G)$.

Finally, combining the mappings $\sigma_{1}$, $\sigma_{2}$ and $\sigma_{2}$, the required mapping $\sigma$ can be obtained.

{\bf Case 2.} $A\subset M\setminus\{q\}$.

This case can be treated in a similar way as Case 1.

{\bf Case 3.} $A\cap L\neq \emptyset$.

Fix a point $a\in A\cap L$. Then $G\setminus A$ is a connected subset of the union of the separated sets $(K\cup pa)\setminus\{a\}$ and $(M\cup qa)\setminus\{a\}$. This implies that either $K\cup pa\subset A$ or $M\cup qa\subset A$. We may suppose that $K\cup pa\subset A$. Then $A$ is of one of the forms: (i) $A=K\cup pe$ for some $e\in pq\setminus\{q\}$, or (ii) $K\cup L\cup E$, where $E\in C(M)$ and $q\in E$. In the subcase (i), we can use mappings similar to $\sigma_{2}$ and $\sigma_{3}$ in Case 1 to find the required $\sigma$. In the subcase (ii), we have that $M\setminus E=G\setminus A$ is connected. Applying Theorem 4, we obtain a mapping $\xi:[0,1]\rightarrow NC^{*}(M)$
such that $\xi(0)=E$ and $\xi(1)=M$. Thus, the mapping $\sigma:[0,1]\rightarrow
C(G)$ given by $\sigma(t)=K\cup L\cup\xi(t)$ satisfies the required conditions. 

$(c) \Rightarrow (a)$. By Lemma \ref{five}, for each $p\in G$, $G\setminus \{p\}$ has exactly one or two components. Since tangles are in $\mathcal{FT}$, we suppose that $G$ contains cut-points.

{\bf Case 1.} $G$ contains exactly one cut-point.

Let $p$ be the only cut-point of $G$. Then $G\setminus \{p\}$ has exactly two components. Then there exist two nonempty disjoint open subsets $U$ and $V$ of $G$ such that $G\setminus \{p\}=U\cup V$. By Lemma \ref{five}, $U$ and $V$ are connected. By \cite[Proposition  6.3, p.88]{n}, the sets $K=U\cup\{p\}$ and $M=V\cup\{p\}$ are subcontinua of $G$. If $K$ is not a tangle, there exists a point $q\in U$ such that $K\setminus \{q\}=Y\cup Z$, for some nonempty disjoint open subsets of $K$. We may assume that $p\in Z$. Then $Y$ and $Z\cup V$ are separated subsets of $G$ such that $G\setminus \{q\}=Y\cup (Z\cup V)$. Hence, $q$ is also a cut-point of $G$. This contradicts our assumption for this case and proves that $K$ is a tangle. Similarly, $M$ is a tangle. Thus, $G$ is the junction of two tangles and $G\in\mathcal{FT}$.

{\bf Case 2.} $G$ contains at least two cut-points.

Let $p\neq q$ be two cut-points of $G$. In this case, $G$ is not a simple closed curve. 

Then there exist two nonempty disjoint open subsets
$U$ and $V$ of $G$ such that $G\setminus \{p\}=U\cup V$.
We assume that $q\in V$. Let $Y$ and $Z$ be nonempty disjoint open subsets
of $G$ such that $G\setminus \{q\}=Y\cup Z$. By Lemma \ref{five}, $U$, $V$, $Y$ and $Z$ are connected. We assume that $p\in Y$.

By \cite[Proposition  6.3, p.88]{n}, $U\cup\{p\}$, $Y\cup\{q\}$ and $Z\cup\{q\}$ are subcontinua of $G$. Observe that $U\cup\{p\}\subset G\setminus \{q\}=Y\cup Z$. Since $p\in Y$, we have that $U\cup\{p\}\subset Y$. Similarly, $Z\cup\{q\}\subset V$.

Observe that $Y\cup\{q\}=U\cup\{p\}\cup((V\cap Y)\cup\{q\})$ and $(Y\cup\{q\})\setminus\{p\}=U\cup((V\cap Y)\cup \{q\})$. Since $U$ and $(V\cap Y)\cup \{q\}$ are separated, by \cite[Proposition  6.3, p.88]{n}, the set

\begin{center}
$T=\{p\}\cup (V\cap Y)\cup\{q\}$
\end{center}
 is a continuum.

On the other hand, $Y=U\cup\{p\}\cup(V\cap
Y)$ and $Y\setminus\{p\}=U\cup(V\cap Y)$.
Since $U$ and $V\cap Y$ are separated, by \cite[Proposition  6.3, p.88]{n}, $\{p\}\cup
(V\cap Y)$ is connected. Thus, $q$ is not a cut-point of $T$. Moreover, since $V=(V\cap Y)\cup\{q\}\cup(V\cap Z)$, we have that $(V\cap Y)\cup\{q\}$ is connected. Thus, $p$ is not a cut-point of $T$.

We check that $T$ is an arc with end-points $p$ and $q$. Suppose the contrary. By \cite[Theorem 6.17, p.96]{n}, $T$ contains a non-cut-point $x\notin\{p,q\}$. Since $G\setminus\{x\}=(T\setminus\{x\})\cup(U\cup\{p\})\cup (Z\cup \{q\})$, we conclude that $x$ is not a cut-point of $G$. So, $\{x\}\in NC^{*}(G)$. Let $\mathcal{W}=\{A\in NC^{*}(G):A\subset V\cap Y\}$. Then $\mathcal{W}$ is open and nonempty in $NC^{*}(G)$. We claim that $\mathcal{W}$ is also closed in $NC^{*}(G)$. Suppose to the contrary that there exists an element $A\in\operatorname{cl}_{NC^{*}(G)}(\mathcal{W})\setminus\{\mathcal{W}\}\subset \operatorname{cl}_{C(G)}(\mathcal{W})$. Then $A\subset\operatorname{cl}_{G}(V\cap Y)=T$. This implies that $U\cup Z\subset G\setminus A$. Since $A\nsubseteq V\cap Y$, we may assume that $p\in A$. Then $G\setminus A\subset U\cup V$, $U\subset G\setminus A$ and $Z\subset V\cap (G\setminus A)$. Hence, $G\setminus A$ is not connected and $A\notin NC^{*}(G)$, a contradiction. Therefore, $\mathcal{W}$ is a nonempty proper open and closed subset of $NC^{*}(G)$, contradicting our assumption. This ends the proof that $T$ is an arc with end-points $p$ and $q$.

Since $T\setminus \{p,q\}=V\cap Y$, we have that $T\setminus\{p,q\}$ is open in $G$. Thus, $T\setminus\{p,q\}$ is arcwise connected and contains no vertices of $G$. This implies that there exists an edge $L$ of $G$ such that $pq\subset L$. If $L$ is a cycle, since $G$ is not a simple closed curve, we have that there exists a point $x_{0}\in L$, such that $\operatorname{Fr}_{G}(L)=\{x_{0}\}$ and $x_{0}$ is the only cut-point of $G$ belonging to $L$. This contradicts that $\{p,q\}\subset L$. Thus, $L$ is an arc. Let $p_{0}$ and $q_{0}$ be the end-points of $L$. We suppose that we have an order $<$ in $L$ such that $p_{0}<q_{0}$. We may suppose also that $p<q$. Then $q\notin p_{0}p$ and $p\notin qq_{0}$.

Let $L'=L\setminus\{p_{0},q_{0}\}$. Let $D$ (resp., $E$) be the component of $G\setminus L'$ containing $p_{0}$ (resp., $q_{0}$). Then $D$ and $E$ are subcontinua of $G$.

Observe that
\begin{center}
$G=U\cup\{p\}\cup(V\cap Y)\cup\{q\}\cup Z=U\cup pq\cup Z$.
\end{center}

If $p_{0}\in\{q\}\cup Z$, since $p\in Y$, we have that $q\in p_{0}p$, a contradiction. This implies that $p_{0}\in U\cup\{p\}$. Similarly, $q_{0}\in \{q\}\cup Z$. Since $D\subset G\setminus (p_{0}q_{0}\setminus\{p_{0},q_{0}\})\subset G\setminus (pq\setminus\{p,q\})=(U\cup\{p\})\cup(\{q\}\cup Z)$, the sets $U\cup\{p\}$ and $\{q\}\cup Z$ are closed and disjoint; and $p_{0}\in D\cap (U\cup\{p\})$, we have that $D\subset U\cup\{p\}$. Similarly, $E\subset \{q\}\cup Z$. Therefore, $D\cap E=\emptyset$. 

Given $x\in G\setminus L$, let $xy$ be an arc in $G$ such that $y\in L$ and $xy\cap L=\{y\}$. Then $y\in\{p_{0},q_{0}\}$. Since $xy\subset G\setminus L'$, we conclude that $x\in D\cup E$. Therefore, 

\begin{center}
$G=D\cup p_{0}q_{0}\cup E$.
\end{center}
 Observe that $D\cap p_{0}q_{0}=\{p_{0}\}$, $E\cap p_{0}q_{0}=\{q_{0}\}$ and the only arc joining $p$ and $q$ is the subarc of $p_{0}q_{0}$ joining them.

If we take a cut-point $z$ of $G$ such that $z\notin\{p,q\}$, we can proceed with the pairs $p,z$ and $q,z$ in the same way as we did with the pair and obtain that there exist only one arc $pz$ and only one arc $pz$, and the sets $pz\setminus\{p,z\}$ and $qz\setminus\{q,z\}$ contain no vertices of $G$. This implies that $z\in p_{0}q_{0}$. We have shown that the set of cut-points of $G$ is contained in $p_{0}q_{0}$. 

We claim that if $D$ is non-degenerate, then $D$ is a tangle. Suppose to the contrary that there exists a point $z\in D$ such that $D\setminus\{z\}=R\cup S$, where $R\neq\emptyset\neq S$, and $R$ and $S$ are separated in $D$ (and then they are separated in $G$). In the case that $z=p_{0}$, we have that $G\setminus\{p_{0}\}=R\cup S\cup ((p_{0}q_{0}\setminus\{p_{0}\})\cup E)$. Since the sets $R$, $S$ and $(p_{0}q_{0}\setminus\{p_{0}\})\cup E$ are nonempty and they are pairwise separated, we infer that $G\setminus \{p_{0}\}$ has at least three components. This contradicts Lemma \ref{five} and shows that $z\neq p_{0}$. Then we may suppose that $p_{0}\in S$. Thus, $G\setminus\{z\}=R\cup(S\cup p_{0}q_{0}\cup E)$ and the sets $R$ and $S\cup p_{0}q_{0}\cup E$ are nonempty and separated. Hence, $z$ is a cut-point of $G$. By the previous paragraph, $z\in p_{0}q_{0}$, a contradiction. Therefore, $D$ is a tangle.

Similarly, if $E$ is non-degenerate, then $E$ is a tangle.

We are ready to finish the proof of the theorem. If $D$ and $E$ are degenerate, then $G$ is an arc; if only one of the sets $D$ and $E$ is degenerate, then $G$ is a tangle with a sticker; and if both sets $D$ and $E$ are non-degenerate, then $G$ is the union of two tangles joined by an arc. In any case $G\in \mathcal{FT}$. \end{proof}

\section{Two more questions}

A {\it dendroid} is an arcwise connected continuum $X$ such that for every $A,B\in C(X)$, $A\cap B$ is connected. A dendroid containing exactly one ramification point is a {\it fan}.

\begin{question}\label{seven}
Let $X$ be a dendroid such that $NC^{*}(X)$ is compact. Is $X$ an arc?
\end{question} 

\begin{question}\label{eight}
Is there a fan $X$ such that $NC^{*}(X)$ is compact?
\end{question}

\bigskip

{\bf Acknowledgments} This paper was partially supported by the projects "Sistemas Din\'{a}micos discretos y Teor\'{i}a de Continuos" PAPIIT-IN105624 from program DGAPA of Universidad Nacional Aut\'onoma de M\'exico (UNAM)
and ``Teor\'{\i}a de Continuos e Hiperespacios, dos" (AI-S-15492) of CONAHCYT. Also, research of the third-named author was supported by "Programa de Estancias Posdoctorales por M\'exico 2023 (1)" (CVU 661607) of CONAHCYT and wishes to thank his colleagues Eduardo Garc\'{i}a-Mu\~noz and Ra\'ul Escobedo at Benem\'erita Universidad Aut\'onoma de Puebla (BUAP).

\bigskip

Addresses: 

Alejandro Illanes and Ver\'onica Mart\'inez-de-la-Vega: Instituto de Matem\'aticas, Universidad Nacional Aut\'onoma de M\'exico, Circuito Exterior, Cd. Universitaria, M\'exico 04510, Cd, de M\'exico,
M\'exico.

Jorge E. Vega: Benem\'erita Universidad Aut\'onoma de Puebla, Facultad de Ciencias F\'{i}sico-Matem\'aticas, Av. San Claudio y 18 sur, Col. San Manuel, Ciudad Universitaria, C.P. 72570, Puebla, Puebla, México.

email addresses: 

A. Illanes: illanes@matem.unam.mx

V. Mart\'inez-de-la-Vega: vmvm@matem.unam.mx

J.E. Vega: vegacevedofcfm@fcfm.buap.mx

\end{document}